\DeclareMathOperator{\tr}{tr}
\newtheorem{theorem}{\it Theorem}
\newtheorem{lemma}{\it Lemma}
\newtheorem{definition}{\it Definition}
\newtheorem{corollary}{\it Corollary}
\newtheorem{proposition}{\it Proposition}
\title{\LARGE \bf
	Independent Elliptical Distributions Minimize Their $\mathcal{W}_2$ Wasserstein Distance from Independent Elliptical Distributions\\ with the Same Density Generator
}
\author{Song Fang$^{1}$ and Quanyan Zhu$^{1}$
\thanks{$^{1}$ Song Fang and Quanyan Zhu are with the Department of Electrical and Computer Engineering, New York University, New York, USA
        {\tt\small song.fang@nyu.edu; quanyan.zhu@nyu.edu}}%
}
\begin{document}

\maketitle
\thispagestyle{empty}
\pagestyle{empty}

\begin{abstract}
	
	This short note is on a property of the $\mathcal{W}_2$ Wasserstein distance which indicates that independent elliptical distributions minimize their $\mathcal{W}_2$ Wasserstein distance from given independent elliptical distributions with the same density generators. Furthermore, we examine the implications of this property in the Gelbrich bound when the distributions are not necessarily elliptical. Meanwhile, we also generalize the results to the cases when the distributions are not independent. The primary purpose of this note is for the referencing of papers that need to make use of this property or its implications.
\end{abstract}


\section{Introduction}

%

%
%
%
%
%

The Wasserstein distance (see, e.g., \cite{peyre2019computational, panaretos2020invitation} and the references therein) is an important metric from optimal transport theory (see, e.g., \cite{villani2003topics, villani2008optimal, santambrogio2015optimal} and the references therein). 
In this short note, we focus on a property of the $\mathcal{W}_2$ Wasserstein distance which indicates that independent elliptical distributions minimize their $\mathcal{W}_2$ Wasserstein distance from given independent elliptical distributions with the same density generators; see Theorem~\ref{t1}. Moreover, we investigate what this property  implicates in the Gelbrich bound  for the $\mathcal{W}_2$ Wasserstein distance when the distributions are not necessarily elliptical; see Corollary~\ref{c1}. We then generalize the results in Theorem~\ref{t1} and Corollary~\ref{c1} to the cases when the distributions are not independent; see Theorem~\ref{t2} and Corollary~\ref{c2}, respectively. It is worth mentioning that, in a broad sense, the results presented in this note parallel those of \cite{KLproperties} (for the Kullback--Liebler divergence) as well as \cite{fang2017towards} (for entropy and mutual information).

\section{Preliminaries}

Throughout the note, we consider zero-mean real-valued continuous random variables and random vectors. We represent random variables and random vectors using boldface letters, e.g., $\mathbf{x}$, while the probability density function of $\mathbf{x}$ is denoted as $p_\mathbf{x}$.

The $\mathcal{W}_p$ Wasserstein distance (see, e.g., \cite{santambrogio2015optimal, peyre2019computational, panaretos2020invitation}) is defined as follows.

\begin{definition}
	The $\mathcal{W}_p$ (for $p \geq 1$) Wasserstein distance between distribution $p_{\mathbf{x}}$ and distribution $p_{\mathbf{y}}$ is defined as
	\begin{flalign}
	\mathcal{W}_p \left( p_{\mathbf{x}} ; p_{\mathbf{y}} \right)
	= \left( \inf_{ \mathbf{x}, \mathbf{y}} \mathbb{E} \left[ \left\| \mathbf{x} - \mathbf{y} \right\|^p \right] \right)^{\frac{1}{p}}, \nonumber
	\end{flalign}
	where $\mathbf{x}$ and $\mathbf{y}$ denote $m$-dimensional random vectors with distributions $p_{\mathbf{x}}$ and $p_{\mathbf{y}}$, respectively.
\end{definition}

Particularly when $p = 2$, the $\mathcal{W}_2$ distance is given by 
\begin{flalign}
\mathcal{W}_2 \left( p_{\mathbf{x}} ; p_{\mathbf{y}} \right)
= \sqrt{ \inf_{ \mathbf{x}, \mathbf{y}} \mathbb{E} \left[ \left\| \mathbf{x} - \mathbf{y} \right\|^2 \right] }. \nonumber
\end{flalign}

The following lemma (see, e.g., \cite{santambrogio2015optimal, peyre2019computational, panaretos2020invitation}) provides an explicit expression for the $\mathcal{W}_2$ distance between elliptical distributions with the same density generator. Note that Gaussian distributions are a special class of elliptical distributions (see, e.g., \cite{peyre2019computational}). Note also that hereinafter the random vectors are assumed to be zero-mean for simplicity.

\begin{lemma} \label{Gaussian}
	Consider $m$-dimensional elliptical random vectors $\mathbf{x}$ and $\mathbf{y}$ with the same density generator, while with covariance matrices $\Sigma_\mathbf{x}$ and $\Sigma_\mathbf{y}$, respectively.
	The $\mathcal{W}_2$ distance between distribution $p_\mathbf{x}$
    and distribution $p_\mathbf{y}$ is given by
	\begin{flalign}
	\mathcal{W}_2 \left( p_{\mathbf{x}} ; p_{\mathbf{y}} \right) 
	= \sqrt{ \tr \left[ \Sigma_{\mathbf{x}} + \Sigma_{\mathbf{y}} - 2 \left( \Sigma_{\mathbf{x}}^{\frac{1}{2}}  \Sigma_{\mathbf{y}}  \Sigma_{\mathbf{x}}^{\frac{1}{2}} \right)^{\frac{1}{2}} \right] }. \nonumber 
	\end{flalign}
\end{lemma}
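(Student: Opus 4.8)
The plan is to work directly from the definition, which expresses $\mathcal{W}_2^2$ as an infimum of $\mathbb{E}[\|\mathbf{x}-\mathbf{y}\|^2]$ over all couplings (joint distributions) having the prescribed marginals $p_\mathbf{x}$ and $p_\mathbf{y}$. First I would expand the cost as $\mathbb{E}[\|\mathbf{x}-\mathbf{y}\|^2] = \mathbb{E}[\|\mathbf{x}\|^2] + \mathbb{E}[\|\mathbf{y}\|^2] - 2\mathbb{E}[\mathbf{x}^\top\mathbf{y}]$. Because the marginals are fixed and zero-mean, the first two terms equal $\tr(\Sigma_\mathbf{x})$ and $\tr(\Sigma_\mathbf{y})$ respectively and are independent of the coupling. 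Writing $C = \mathbb{E}[\mathbf{x}\mathbf{y}^\top]$ for the cross-covariance, we have $\mathbb{E}[\mathbf{x}^\top\mathbf{y}] = \tr(C)$, so minimizing the cost is equivalent to maximizing $\tr(C)$ over all admissible $C$.

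The admissible cross-covariances are constrained by the requirement that the joint covariance matrix $\begin{pmatrix}\Sigma_\mathbf{x} & C \\ C^\top & \Sigma_\mathbf{y}\end{pmatrix}$ be positive semidefinite. Second, I would reduce the problem to a matrix optimization: assuming $\Sigma_\mathbf{x},\Sigma_\mathbf{y}\succ 0$ and using the Schur complement, the positive-semidefiniteness constraint becomes the statement that $K := \Sigma_\mathbf{x}^{-1/2} C \Sigma_\mathbf{y}^{-1/2}$ is a contraction, i.e.\ $K^\top K \preceq I$. Then $\tr(C) = \tr(\Sigma_\mathbf{y}^{1/2}\Sigma_\mathbf{x}^{1/2} K)$, and von Neumann's trace inequality (equivalently, a singular-value-decomposition argument) gives $\max_{K^\top K \preceq I}\tr(\Sigma_\mathbf{y}^{1/2}\Sigma_\mathbf{x}^{1/2}K) = \tr[(\Sigma_\mathbf{x}^{1/2}\Sigma_\mathbf{y}\Sigma_\mathbf{x}^{1/2})^{1/2}]$, the nuclear norm of $\Sigma_\mathbf{y}^{1/2}\Sigma_\mathbf{x}^{1/2}$. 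Substituting back yields the lower bound $\mathcal{W}_2^2 \ge \tr[\Sigma_\mathbf{x}+\Sigma_\mathbf{y}-2(\Sigma_\mathbf{x}^{1/2}\Sigma_\mathbf{y}\Sigma_\mathbf{x}^{1/2})^{1/2}]$, valid for \emph{any} pair of distributions with the given covariances.

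Third, I would establish achievability, and this is where the hypothesis that $\mathbf{x}$ and $\mathbf{y}$ share the same density generator becomes essential. The idea is to exhibit an explicit coupling attaining the bound, namely the linear map $\mathbf{y} = T\mathbf{x}$ with $T = \Sigma_\mathbf{x}^{-1/2}(\Sigma_\mathbf{x}^{1/2}\Sigma_\mathbf{y}\Sigma_\mathbf{x}^{1/2})^{1/2}\Sigma_\mathbf{x}^{-1/2}$. One checks that $T$ is symmetric positive semidefinite, that $T\Sigma_\mathbf{x}T^\top = \Sigma_\mathbf{y}$ so the resulting cross-covariance $C = \Sigma_\mathbf{x} T^\top$ saturates the contraction bound with $\tr(C) = \tr[(\Sigma_\mathbf{x}^{1/2}\Sigma_\mathbf{y}\Sigma_\mathbf{x}^{1/2})^{1/2}]$, and, crucially, that since elliptical laws are closed under invertible linear transformations with the generator left unchanged, $T\mathbf{x}$ indeed has law $p_\mathbf{y}$. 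I expect this last verification --- that the linearly transported variable carries exactly the prescribed elliptical marginal, a step that would fail without the common-generator assumption --- to be the main obstacle; the matrix optimization underlying the lower bound is comparatively routine once the contraction parametrization is in place.
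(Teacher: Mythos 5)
The paper does not actually prove Lemma~\ref{Gaussian}; it is quoted as a known result (the Dowson--Landau/Gelbrich formula) with citations to the optimal-transport literature, so there is no in-paper argument to compare against. Your proposal is the standard proof of that result and is essentially correct: the decomposition $\mathbb{E}\|\mathbf{x}-\mathbf{y}\|^2=\tr\Sigma_{\mathbf{x}}+\tr\Sigma_{\mathbf{y}}-2\tr C$, the Schur-complement reduction to maximizing $\tr(C)$ over contractions $K=\Sigma_{\mathbf{x}}^{-1/2}C\Sigma_{\mathbf{y}}^{-1/2}$, and the von Neumann bound $\tr(C)\le\tr[(\Sigma_{\mathbf{x}}^{1/2}\Sigma_{\mathbf{y}}\Sigma_{\mathbf{x}}^{1/2})^{1/2}]$ together give exactly the lower bound that the paper records separately as Lemma~\ref{Gelbrich}; the explicit map $T=\Sigma_{\mathbf{x}}^{-1/2}(\Sigma_{\mathbf{x}}^{1/2}\Sigma_{\mathbf{y}}\Sigma_{\mathbf{x}}^{1/2})^{1/2}\Sigma_{\mathbf{x}}^{-1/2}$ indeed satisfies $T\Sigma_{\mathbf{x}}T^{\mathrm{T}}=\Sigma_{\mathbf{y}}$ and saturates the trace bound. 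Two small points deserve explicit mention. First, your achievability step needs $\Sigma_{\mathbf{x}},\Sigma_{\mathbf{y}}\succ 0$ so that $T$ is well defined and invertible; the lemma as stated does not assume this (the rest of the paper does), and the degenerate case requires a limiting argument. Second, the closure step should be stated carefully: $T\mathbf{x}$ is elliptical with the same density generator and covariance $\Sigma_{\mathbf{y}}$, and one then invokes the fact that a zero-mean elliptical law with finite second moments is uniquely determined by its generator and covariance to conclude $T\mathbf{x}\sim p_{\mathbf{y}}$ --- this is precisely where the same-generator hypothesis is used, as you correctly anticipate.
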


\vspace{3mm}

Meanwhile, the Gelbrich bound (see, e.g., \cite{peyre2019computational, panaretos2020invitation}) is given as follows, which provides a generic lower bound for the $\mathcal{W}_2$ distance between distributions that are not necessarily elliptical.

\begin{lemma} \label{Gelbrich}
	Consider $m$-dimensional random vectors $\mathbf{x}$ and $\mathbf{y}$ with covariance matrices $\Sigma_\mathbf{x}$ and $\Sigma_\mathbf{y}$, respectively.
	The $\mathcal{W}_2$ distance between distribution $p_\mathbf{x}$
	and distribution $p_\mathbf{y}$ is lower bounded by
	\begin{flalign}
	\mathcal{W}_2 \left( p_{\mathbf{x}} ; p_{\mathbf{y}} \right) 
	\geq \sqrt{ \tr \left[ \Sigma_{\mathbf{x}} + \Sigma_{\mathbf{y}} - 2 \left( \Sigma_{\mathbf{x}}^{\frac{1}{2}}  \Sigma_{\mathbf{y}}  \Sigma_{\mathbf{x}}^{\frac{1}{2}} \right)^{\frac{1}{2}} \right] }. \nonumber
	\end{flalign}
\end{lemma}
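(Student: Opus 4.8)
The plan is to work directly from the coupling formulation of the $\mathcal{W}_2$ distance and to reduce the problem to a trace maximization over admissible cross-covariance matrices, which can then be relaxed to a semidefinite program whose optimum coincides with the quantity on the right-hand side. First I would expand the cost inside the infimum: for any coupling (joint distribution) $\pi$ of $\mathbf{x}$ and $\mathbf{y}$ with the prescribed marginals,
\begin{flalign}
\mathbb{E} \left[ \left\| \mathbf{x} - \mathbf{y} \right\|^2 \right] = \tr \Sigma_{\mathbf{x}} + \tr \Sigma_{\mathbf{y}} - 2 \tr C, \nonumber
\end{flalign}
where $C = \mathbb{E} [ \mathbf{x} \mathbf{y}^{\mathrm{T}} ]$ is the cross-covariance induced by $\pi$, using that both vectors are zero-mean. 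Since the marginals fix $\tr \Sigma_{\mathbf{x}}$ and $\tr \Sigma_{\mathbf{y}}$, minimizing the cost is equivalent to maximizing $\tr C$ over all couplings, so that
\begin{flalign}
\mathcal{W}_2^2 \left( p_{\mathbf{x}} ; p_{\mathbf{y}} \right) = \tr \Sigma_{\mathbf{x}} + \tr \Sigma_{\mathbf{y}} - 2 \sup_{\pi} \tr C. \nonumber
\end{flalign}

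The key observation is that every coupling $\pi$ induces a joint covariance matrix
\begin{flalign}
\begin{bmatrix} \Sigma_{\mathbf{x}} & C \\ C^{\mathrm{T}} & \Sigma_{\mathbf{y}} \end{bmatrix} \succeq 0, \nonumber
\end{flalign}
so the attainable cross-covariances form a subset of the matrices $C$ satisfying this positive-semidefinite constraint. Relaxing $\sup_{\pi} \tr C$ to the supremum over this larger set can only increase it, and therefore yields a lower bound on $\mathcal{W}_2^2$. It thus suffices to solve the semidefinite program of maximizing $\tr C$ subject to the block matrix being positive semidefinite, and to show that its optimal value equals $\tr [ ( \Sigma_{\mathbf{x}}^{1/2} \Sigma_{\mathbf{y}} \Sigma_{\mathbf{x}}^{1/2} )^{1/2} ]$.

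I expect this last step to be the main obstacle. Assuming $\Sigma_{\mathbf{x}} \succ 0$, I would use the Schur complement to rewrite the block constraint as $\Sigma_{\mathbf{y}} - C^{\mathrm{T}} \Sigma_{\mathbf{x}}^{-1} C \succeq 0$ and then, either through the change of variables $C = \Sigma_{\mathbf{x}}^{1/2} K \Sigma_{\mathbf{x}}^{-1/2}$ or via the KKT conditions, show that the maximizer is
\begin{flalign}
C = \Sigma_{\mathbf{x}}^{1/2} \left( \Sigma_{\mathbf{x}}^{1/2} \Sigma_{\mathbf{y}} \Sigma_{\mathbf{x}}^{1/2} \right)^{1/2} \Sigma_{\mathbf{x}}^{-1/2}, \nonumber
\end{flalign}
for which the Schur complement vanishes (so the constraint is active) and, by the cyclic property of the trace, $\tr C = \tr [ ( \Sigma_{\mathbf{x}}^{1/2} \Sigma_{\mathbf{y}} \Sigma_{\mathbf{x}}^{1/2} )^{1/2} ]$. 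Substituting this value back produces exactly the stated right-hand side. The degenerate case of singular $\Sigma_{\mathbf{x}}$ would follow by a limiting argument using continuity of both sides in the covariances.

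Finally, I would note that the bound is consistent with and in fact tight against Lemma~\ref{Gaussian}: the Gaussian coupling realizing the optimal $C$ above attains equality, which explains why elliptical distributions with the given covariances meet this lower bound. This last remark is not needed for the inequality itself but clarifies that no slack has been introduced in the semidefinite relaxation.
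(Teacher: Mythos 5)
The paper does not actually prove this lemma: it is imported from the literature as the Gelbrich bound (cited via \cite{peyre2019computational, panaretos2020invitation}), so there is no in-paper argument to compare yours against. Your outline is the standard route to that result, and its architecture is sound: the expansion of the quadratic cost for zero-mean marginals, the reduction of the infimum to maximizing $\tr C$ over couplings, and the relaxation to the positive-semidefiniteness of the joint covariance block matrix are all correct and all point in the right direction for a lower bound.

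The genuine gap is at the step you yourself flag as ``the main obstacle,'' and it is the only step that carries the weight of the lemma. To conclude $\mathcal{W}_2^2 ( p_{\mathbf{x}} ; p_{\mathbf{y}} ) \geq \tr \Sigma_{\mathbf{x}} + \tr \Sigma_{\mathbf{y}} - 2 \tr [ ( \Sigma_{\mathbf{x}}^{1/2} \Sigma_{\mathbf{y}} \Sigma_{\mathbf{x}}^{1/2} )^{1/2} ]$ you need the \emph{upper} bound $\tr C \leq \tr [ ( \Sigma_{\mathbf{x}}^{1/2} \Sigma_{\mathbf{y}} \Sigma_{\mathbf{x}}^{1/2} )^{1/2} ]$ for \emph{every} feasible $C$. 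What you actually verify is that the candidate $C = \Sigma_{\mathbf{x}}^{1/2} ( \Sigma_{\mathbf{x}}^{1/2} \Sigma_{\mathbf{y}} \Sigma_{\mathbf{x}}^{1/2} )^{1/2} \Sigma_{\mathbf{x}}^{-1/2}$ is feasible and attains that value; this only lower-bounds the SDP optimum, which is the direction irrelevant to the inequality being proved (it matters only for your closing tightness remark). The missing argument can be supplied cleanly: the block constraint is equivalent to $C = \Sigma_{\mathbf{x}}^{1/2} K \Sigma_{\mathbf{y}}^{1/2}$ for some contraction $K$ with $\| K \| \leq 1$, whence $\tr C = \tr ( K \Sigma_{\mathbf{y}}^{1/2} \Sigma_{\mathbf{x}}^{1/2} )$ is bounded above by the trace norm (sum of singular values) of $\Sigma_{\mathbf{y}}^{1/2} \Sigma_{\mathbf{x}}^{1/2}$, which equals $\tr [ ( \Sigma_{\mathbf{x}}^{1/2} \Sigma_{\mathbf{y}} \Sigma_{\mathbf{x}}^{1/2} )^{1/2} ]$ by von Neumann's trace inequality (equivalently, by the duality between the operator norm and the trace norm). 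Until some such argument is in place, the chain of inequalities does not close. The remaining ingredients --- the limiting argument for singular $\Sigma_{\mathbf{x}}$ and the attainment by a jointly elliptical coupling --- are fine as sketched.
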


\vspace{3mm}

%

\section{$\mathcal{W}_2$ Distance Minimizing Distributions}

We first present the following proposition.

\begin{proposition} \label{half}
	Consider a positive definite matrix $\Sigma_{\mathbf{x}} \in \mathbb{R}^{m \times m}$. Denote the diagonal terms of $\Sigma_{\mathbf{x}}$ by $\sigma_{\mathbf{x} \left( 1 \right)}^2, \ldots, \sigma_{\mathbf{x} \left( m \right)}^2$, while denote the eigenvalues of $\Sigma_{\mathbf{x}}$ by $\lambda_{1}, \ldots, \lambda_{m}$. Then, 
	\begin{flalign}
	\tr \left( \Sigma_{\mathbf{x}}^{\frac{1}{2}} \right)
	= \sum_{i=1}^{m} \lambda_i^{\frac{1}{2}} \leq \sum_{i=1}^{m}  \left[ \sigma_{\mathbf{x} \left( i \right)}^2  \right]^{\frac{1}{2}},
	\end{flalign} 
	where equality holds if and only if $\Sigma_{\mathbf{x}}$ is  a diagonal matrix.  
\end{proposition}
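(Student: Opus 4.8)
The plan is to dispatch the equality $\tr ( \Sigma_{\mathbf{x}}^{\frac{1}{2}} ) = \sum_{i=1}^m \lambda_i^{\frac{1}{2}}$ first and then concentrate on the inequality, which carries the content. Since $\Sigma_{\mathbf{x}}$ is positive definite, it admits a unique symmetric positive definite square root $\Sigma_{\mathbf{x}}^{\frac{1}{2}}$ whose eigenvalues are precisely $\lambda_1^{\frac{1}{2}}, \ldots, \lambda_m^{\frac{1}{2}}$; as the trace equals the sum of the eigenvalues, the leftmost equality follows immediately.

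For the inequality I would invoke the spectral decomposition $\Sigma_{\mathbf{x}} = Q \Lambda Q^{T}$, where $Q = [ q_{ij} ]$ is orthogonal and $\Lambda = \mathrm{diag} ( \lambda_1, \ldots, \lambda_m )$. Reading off the $i$-th diagonal entry gives $\sigma_{\mathbf{x} ( i )}^2 = \sum_{j=1}^m q_{ij}^2 \lambda_j$, which exhibits each diagonal entry as a convex combination of the eigenvalues, since the rows of $Q$ are unit vectors and hence $\sum_j q_{ij}^2 = 1$. Because $t \mapsto t^{\frac{1}{2}}$ is concave, Jensen's inequality yields $[ \sigma_{\mathbf{x} ( i )}^2 ]^{\frac{1}{2}} = ( \sum_j q_{ij}^2 \lambda_j )^{\frac{1}{2}} \geq \sum_j q_{ij}^2 \lambda_j^{\frac{1}{2}}$ for each $i$. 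Summing over $i$, exchanging the order of summation, and using that the columns of $Q$ are unit vectors too so that $\sum_i q_{ij}^2 = 1$, collapses the right-hand side to $\sum_j \lambda_j^{\frac{1}{2}}$, which is exactly the claimed bound. (As an aside, the same conclusion can be read off from the Schur--Horn theorem, which states that the diagonal of $\Sigma_{\mathbf{x}}$ is majorized by its spectrum, combined with the Schur-concavity of $x \mapsto \sum_i \sqrt{x_i}$; I would keep the elementary Jensen argument as the primary route.)

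It remains to pin down the equality case, which I expect to be the only delicate part. The ``if'' direction is immediate, since a diagonal $\Sigma_{\mathbf{x}}$ has its diagonal entries equal to its eigenvalues. For the ``only if'' direction, equality in the summed bound forces equality in each of the $m$ applications of Jensen's inequality; as $t \mapsto t^{\frac{1}{2}}$ is \emph{strictly} concave, equality in the $i$-th instance requires that all eigenvalues $\lambda_j$ indexed by $j$ with $q_{ij} \neq 0$ coincide. I would then note that this makes each standard basis vector $e_i = \sum_j q_{ij} v_j$ (with $v_j$ the columns of $Q$) a combination of eigenvectors sharing a single eigenvalue, hence itself an eigenvector of $\Sigma_{\mathbf{x}}$; since this holds for every $i$, the matrix $\Sigma_{\mathbf{x}}$ is diagonalized by the standard basis and is therefore diagonal. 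The subtlety to handle carefully is the case of repeated eigenvalues, where $Q$ is not unique; the argument is robust to this because it works directly from whatever $Q$ realizes the equality rather than assuming the eigenvalues are distinct.
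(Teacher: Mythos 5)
Your proof is correct, but it takes a genuinely different route from the paper. The paper compares $\Sigma_{\mathbf{x}}$ with the diagonal matrix $\Lambda_{\mathbf{x}}$ of its diagonal entries via Klein's (trace) inequality for the operator convex function $f(x) = -x^{1/2}$, observing that the first-order term $\tr\left[ (\Sigma_{\mathbf{x}} - \Lambda_{\mathbf{x}})\, f'(\Lambda_{\mathbf{x}}) \right]$ vanishes because $\Sigma_{\mathbf{x}} - \Lambda_{\mathbf{x}}$ has zero diagonal while $f'(\Lambda_{\mathbf{x}})$ is diagonal; the equality case is then delegated to the strictness of Klein's inequality. You instead write $\sigma_{\mathbf{x}(i)}^2 = \sum_j q_{ij}^2 \lambda_j$ from the spectral decomposition, apply scalar Jensen to the concave square root, and sum using the double stochasticity of $[\,q_{ij}^2\,]$ --- the classical Schur--Horn/majorization argument you mention in your aside. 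Your route is more elementary (it needs only orthogonality of $Q$ and scalar concavity, not operator convexity or Klein's inequality), it yields the paper's follow-up remark about general exponents $0<q<1$ for free, and your equality analysis is fully self-contained and arguably more careful than the paper's: you correctly note that strict concavity forces the eigenvalues carrying nonzero weight in each row to coincide, whence each $e_i$ is an eigenvector and $\Sigma_{\mathbf{x}}$ is diagonal. What the paper's approach buys in exchange is a template that transfers directly to other operator convex functions and to noncommutative perturbations where a doubly stochastic reduction is less immediate. Both proofs are complete and correct.
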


\begin{proof}
To begin with, denote 
\begin{flalign}
\Sigma_{\mathbf{x}} = 
\begin{bmatrix} \sigma_{\mathbf{x} \left( 1 \right)}^2 & \sigma_{\mathbf{x} \left( 1 \right) \mathbf{x} \left( 2 \right)}^2 & \cdots & \sigma_{\mathbf{x} \left( 1 \right) \mathbf{x} \left( m \right)}^2 \\
\sigma_{\mathbf{x} \left( 1 \right)\mathbf{x} \left( 2 \right)}^2 & \sigma_{\mathbf{x} \left( 2 \right)}^2 & \cdots & \sigma_{\mathbf{x} \left( 2 \right)\mathbf{x} \left( m \right)}^2 \\
\vdots & \vdots & \ddots & \vdots \\
\sigma_{\mathbf{x} \left( 1 \right)\mathbf{x} \left( m \right)}^2 & \sigma_{\mathbf{x} \left( 2 \right) \mathbf{x} \left( m \right)}^2 & \cdots & \sigma_{\mathbf{x} \left( m \right)}^2 
\end{bmatrix}
,\nonumber
\end{flalign} 
and
\begin{flalign}
\Lambda_{\mathbf{x}} = 
\begin{bmatrix} \sigma_{\mathbf{x} \left( 1 \right)}^2 & 0 & \cdots & 0 \\
0 & \sigma_{\mathbf{x} \left( 2 \right)}^2 & \cdots & 0 \\
\vdots & \vdots & \ddots & \vdots \\
0 & 0 & \cdots & \sigma_{\mathbf{x} \left( m \right)}^2 
\end{bmatrix}
.\nonumber
\end{flalign}
It is clear that $\lambda_i > 0, i = 1, \ldots, m$ and $\sigma_{\mathbf{x} \left( i \right)}^2 > 0, i = 1, \ldots, m$, since $\Sigma_{\mathbf{x}}$ is positive definite. Meanwhile, the eigenvalues of $\Sigma_{\mathbf{x}}^{\frac{1}{2}}$ are given by $\lambda_{1}^{\frac{1}{2}}, \ldots, \lambda_{m}^{\frac{1}{2}}$, and thus 
\begin{flalign}
\tr \left( \Sigma_{\mathbf{x}}^{\frac{1}{2}} \right)
= \sum_{i=1}^{m} \lambda_i^{\frac{1}{2}}
. \nonumber
\end{flalign} 
On the other hand, we have
\begin{flalign}
\tr \left( \Lambda_{\mathbf{x}}^{\frac{1}{2}} \right)
= \sum_{i=1}^{m}  \left[ \sigma_{\mathbf{x} \left( i \right)}^2  \right]^{\frac{1}{2}}
. \nonumber
\end{flalign} 
It now suffices to prove that
\begin{flalign}
\tr \left( \Sigma_{\mathbf{x}}^{\frac{1}{2}} \right)
\leq 
\tr \left( \Lambda_{\mathbf{x}}^{\frac{1}{2}} \right)
, \nonumber
\end{flalign}
where equality holds if and only if $\Sigma_{\mathbf{x}}$ is  a diagonal matrix. To prove this, note first that according to Klein's inequality (see, e.g., \cite{ruelle1999statistical}), we have
\begin{flalign}
\tr \left[ \left( - \Sigma_{\mathbf{x}}^{\frac{1}{2}} \right)
- \left( -  \Lambda_{\mathbf{x}}^{\frac{1}{2}} \right) 
- \left( \Sigma_{\mathbf{x}} -  \Lambda_{\mathbf{x}} \right)  \frac{1}{2} \left( -  \Lambda_{\mathbf{x}}^{ - \frac{1}{2}} \right) \right] \geq 0
, \nonumber
\end{flalign}
or equivalently,
\begin{flalign}
\tr \left[ \left( - \Sigma_{\mathbf{x}}^{\frac{1}{2}} \right)
- \left( -  \Lambda_{\mathbf{x}}^{\frac{1}{2}} \right)  \right] \geq 
\tr \left[  \left( \Sigma_{\mathbf{x}} -  \Lambda_{\mathbf{x}} \right)  \frac{1}{2} \left( -  \Lambda_{\mathbf{x}}^{ - \frac{1}{2}} \right) \right]
, \nonumber
\end{flalign}
since $f \left( x \right) = - x^{\frac{1}{2}}$ is operator convex and $f' \left( x \right) = - \frac{1}{2} x^{- \frac{1}{2} }$; in addition, herein equality holds if and only if $\Sigma_{\mathbf{x}} = \Lambda_{\mathbf{x}}$, i.e., if and only if $\Sigma_{\mathbf{x}}$ is diagonal, since $f \left( x \right) = - x^{\frac{1}{2}}$ is strictly convex. Meanwhile, note that
\begin{flalign}
\Sigma_{\mathbf{x}} -  \Lambda_{\mathbf{x}} 
& = \begin{bmatrix} \sigma_{\mathbf{x} \left( 1 \right)}^2 & \sigma_{\mathbf{x} \left( 1 \right) \mathbf{x} \left( 2 \right)}^2 & \cdots & \sigma_{\mathbf{x} \left( 1 \right) \mathbf{x} \left( m \right)}^2 \\
\sigma_{\mathbf{x} \left( 1 \right)\mathbf{x} \left( 2 \right)}^2 & \sigma_{\mathbf{x} \left( 2 \right)}^2 & \cdots & \sigma_{\mathbf{x} \left( 2 \right)\mathbf{x} \left( m \right)}^2 \\
\vdots & \vdots & \ddots & \vdots \\
\sigma_{\mathbf{x} \left( 1 \right)\mathbf{x} \left( m \right)}^2 & \sigma_{\mathbf{x} \left( 2 \right) \mathbf{x} \left( m \right)}^2 & \cdots & \sigma_{\mathbf{x} \left( m \right)}^2 
\end{bmatrix} \nonumber \\
&~~~~ - 
\begin{bmatrix} \sigma_{\mathbf{x} \left( 1 \right)}^2 & 0 & \cdots & 0 \\
0 & \sigma_{\mathbf{x} \left( 2 \right)}^2 & \cdots & 0 \\
\vdots & \vdots & \ddots & \vdots \\
0 & 0 & \cdots & \sigma_{\mathbf{x} \left( m \right)}^2 
\end{bmatrix}
\nonumber \\
& = \begin{bmatrix} 0 & \sigma_{\mathbf{x} \left( 1 \right) \mathbf{x} \left( 2 \right)}^2 & \cdots & \sigma_{\mathbf{x} \left( 1 \right) \mathbf{x} \left( m \right)}^2 \\
\sigma_{\mathbf{x} \left( 1 \right)\mathbf{x} \left( 2 \right)}^2 & 0 & \cdots & \sigma_{\mathbf{x} \left( 2 \right)\mathbf{x} \left( m \right)}^2 \\
\vdots & \vdots & \ddots & \vdots \\
\sigma_{\mathbf{x} \left( 1 \right)\mathbf{x} \left( m \right)}^2 & \sigma_{\mathbf{x} \left( 2 \right) \mathbf{x} \left( m \right)}^2 & \cdots & 0 
\end{bmatrix}
.\nonumber
\end{flalign}
and
\begin{flalign}
\Lambda_{\mathbf{x}}^{- \frac{1}{2}} 
& = \begin{bmatrix} \frac{1}{\sigma_{\mathbf{x} \left( 1 \right)}} & 0 & \cdots & 0 \\
0 & \frac{1}{\sigma_{\mathbf{x} \left( 2 \right)}} & \cdots & 0 \\
\vdots & \vdots & \ddots & \vdots \\
0 & 0 & \cdots & \frac{1}{\sigma_{\mathbf{x} \left( m \right)}}
\end{bmatrix}
,\nonumber
\end{flalign}
where $\sigma_{\mathbf{x} \left( i \right)} = \sqrt{\sigma_{\mathbf{x} \left( i \right)}^2}, i = 1, \ldots, m$.
As a result,
\begin{flalign}
&\left( \Sigma_{\mathbf{x}} -  \Lambda_{\mathbf{x}} \right)  \frac{1}{2} \left( -  \Lambda_{\mathbf{x}}^{ - \frac{1}{2}} \right) = - \frac{1}{2} \left( \Sigma_{\mathbf{x}} -  \Lambda_{\mathbf{x}} \right)   \left(  \Lambda_{\mathbf{x}}^{ - \frac{1}{2}} \right)
\nonumber \\
&~~~~ = - \frac{1}{2} \begin{bmatrix} 0 & \sigma_{\mathbf{x} \left( 1 \right) \mathbf{x} \left( 2 \right)}^2 & \cdots & \sigma_{\mathbf{x} \left( 1 \right) \mathbf{x} \left( m \right)}^2 \\
\sigma_{\mathbf{x} \left( 1 \right)\mathbf{x} \left( 2 \right)}^2 & 0 & \cdots & \sigma_{\mathbf{x} \left( 2 \right)\mathbf{x} \left( m \right)}^2 \\
\vdots & \vdots & \ddots & \vdots \\
\sigma_{\mathbf{x} \left( 1 \right)\mathbf{x} \left( m \right)}^2 & \sigma_{\mathbf{x} \left( 2 \right) \mathbf{x} \left( m \right)}^2 & \cdots & 0 
\end{bmatrix} \nonumber \\
&~~~~~~~~ \times
\begin{bmatrix} \frac{1}{\sigma_{\mathbf{x} \left( 1 \right)}} & 0 & \cdots & 0 \\
0 & \frac{1}{\sigma_{\mathbf{x} \left( 2 \right)}} & \cdots & 0 \\
\vdots & \vdots & \ddots & \vdots \\
0 & 0 & \cdots & \frac{1}{\sigma_{\mathbf{x} \left( m \right)}}
\end{bmatrix} \nonumber \\
&~~~~ = - \frac{1}{2} \begin{bmatrix} 0 & \frac{ \sigma_{\mathbf{x} \left( 1 \right) \mathbf{x} \left( 2 \right)}^2} {\sigma_{\mathbf{x} \left( 2 \right)}} & \cdots & \frac{ \sigma_{\mathbf{x} \left( 1 \right) \mathbf{x} \left( m \right)}^2 }{\sigma_{\mathbf{x} \left( m \right)}} \\
\frac{ \sigma_{\mathbf{x} \left( 1 \right)\mathbf{x} \left( 2 \right)}^2 } {\sigma_{\mathbf{x} \left( 1 \right)}} & 0 & \cdots & \frac{ \sigma_{\mathbf{x} \left( 2 \right)\mathbf{x} \left( m \right)}^2 } {\sigma_{\mathbf{x} \left( m \right)}} \\
\vdots & \vdots & \ddots & \vdots \\
\frac{ \sigma_{\mathbf{x} \left( 1 \right)\mathbf{x} \left( m \right)}^2 } {\sigma_{\mathbf{x} \left( 1 \right)}} & \frac{ \sigma_{\mathbf{x} \left( 2 \right) \mathbf{x} \left( m \right)}^2 }{\sigma_{\mathbf{x} \left( 2 \right)}} & \cdots & 0 
\end{bmatrix} 
,\nonumber
\end{flalign}
and hence
\begin{flalign}
\tr \left[ \left( \Sigma_{\mathbf{x}}
- \Lambda_{\mathbf{x}} \right) \frac{1}{2} \left( -  \Lambda_{\mathbf{x}}^{ - \frac{1}{2}} \right) \right]
= 0
. \nonumber
\end{flalign}
Consequently,
\begin{flalign}
\tr \left( \Lambda_{\mathbf{x}}^{\frac{1}{2}} \right) - \tr \left( \Sigma_{\mathbf{x}}^{\frac{1}{2}} \right)
&= \tr \left( \Lambda_{\mathbf{x}}^{\frac{1}{2}} - \Sigma_{\mathbf{x}}^{\frac{1}{2}} \right) \nonumber \\
&\geq \tr \left[ \left( \Sigma_{\mathbf{x}}
- \Lambda_{\mathbf{x}} \right) \frac{1}{2} \left( -  \Lambda_{\mathbf{x}}^{ - \frac{1}{2}} \right) \right]
= 0
, \nonumber
\end{flalign}
where equality holds if and only if $\Sigma_{\mathbf{x}}$ is diagonal.
\end{proof}

As a matter of fact,  it can be proved more generally that for any $0 < q < 1$, 
\begin{flalign}
\tr \left( \Sigma_{\mathbf{x}}^{q} \right)
= \sum_{i=1}^{m} \lambda_i^{q} \leq \sum_{i=1}^{m}  \left[ \sigma_{\mathbf{x} \left( i \right)}^2  \right]^{q},
\end{flalign} 
where equality holds if and only if $\Sigma_{\mathbf{x}}$ is a diagonal matrix, in a similar spirit to the case of $q = \frac{1}{2}$. Although this note only concerns the case of $q = \frac{1}{2}$, the more general case of $0 < q < 1$ may be found useful in other settings.

We now proceed to present the main results of this note.

\begin{theorem} \label{t1}
	Consider $m$-dimensional random vectors $\mathbf{x}$ and $\mathbf{y}$ with positive definite covariance matrices $\Sigma_{\mathbf{x}}$ and $\Sigma_{\mathbf{y}}$, respectively. Suppose that $\mathbf{x}$ is elliptically distributed with density generator $g_\mathbf{x} \left( \mathbf{u} \right)$, whereas $\mathbf{y}$ is not necessarily elliptical. In addition, suppose that $\Sigma_{\mathbf{x}}$ is a diagonal matrix ($\mathbf{x}$ is independent element-wise), i.e.,
	\begin{flalign}
	\Sigma_{\mathbf{x}}
	= \Lambda_{\mathbf{x}}
	= \mathrm{diag} \left( \sigma_{\mathbf{x} \left( 1 \right)}^2, \ldots, \sigma_{\mathbf{x} \left( m \right)}^2 \right).
	\end{flalign}
	Meanwhile, denote the diagonal terms of $\Sigma_{\mathbf{y}}$ by $\sigma_{\mathbf{y} \left( 1 \right)}^2, \ldots, \sigma_{\mathbf{y} \left( m \right)}^2$, whereas $\Sigma_{\mathbf{y}}$ is not necessarily diagonal.
	Then,
	\begin{flalign} \label{b1}
	\mathcal{W}_2 \left( p_{\mathbf{x}} ; p_{\mathbf{y}} \right)
	& \geq \sqrt{\sum_{i=1}^{m} \left\{  \sigma_{\mathbf{x} \left( i \right)}^2 + \sigma_{\mathbf{y} \left( i \right)}^2 - 2 \left[ \sigma_{\mathbf{x} \left( i \right)}^2 \sigma_{\mathbf{y} \left( i \right)}^2 \right]^{\frac{1}{2}} \right\}} \nonumber \\
	& = \sqrt{\sum_{i=1}^{m} \left\{  \left[ \sigma_{\mathbf{x} \left( i \right)}^2  \right]^{\frac{1}{2}}
	- \left[  \sigma_{\mathbf{y} \left( i \right)}^2 \right]^{\frac{1}{2}} \right\}^2 },
	\end{flalign}
	where equality holds if $\mathbf{y}$ is elliptical with the same density generator as $\mathbf{x}$, i.e., $g_\mathbf{y} \left( \mathbf{u} \right) = g_\mathbf{x} \left( \mathbf{u} \right)$, while $\Sigma_{\mathbf{y}}$ is a diagonal matrix as
	\begin{flalign}
	\Sigma_{\mathbf{y}}
	= \Lambda_{\mathbf{y}}
	= \mathrm{diag} \left( \sigma_{\mathbf{y} \left( 1 \right)}^2, \ldots, \sigma_{\mathbf{y} \left( m \right)}^2 \right),
	\end{flalign}
	i.e., $\mathbf{y}$ is independent element-wise.
\end{theorem}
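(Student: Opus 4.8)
The plan is to combine the Gelbrich lower bound (Lemma~\ref{Gelbrich}) with Proposition~\ref{half}. Starting from Lemma~\ref{Gelbrich} and expanding the trace by linearity, I would write
\[
\mathcal{W}_2 \left( p_{\mathbf{x}} ; p_{\mathbf{y}} \right)
\geq \sqrt{ \tr \left( \Sigma_{\mathbf{x}} \right) + \tr \left( \Sigma_{\mathbf{y}} \right) - 2 \tr \left[ \left( \Sigma_{\mathbf{x}}^{\frac{1}{2}} \Sigma_{\mathbf{y}} \Sigma_{\mathbf{x}}^{\frac{1}{2}} \right)^{\frac{1}{2}} \right] },
\]
and observe that $\tr \left( \Sigma_{\mathbf{x}} \right) = \sum_{i=1}^m \sigma_{\mathbf{x} \left( i \right)}^2$ and $\tr \left( \Sigma_{\mathbf{y}} \right) = \sum_{i=1}^m \sigma_{\mathbf{y} \left( i \right)}^2$. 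The task then reduces to bounding the cross term $\tr [ ( \Sigma_{\mathbf{x}}^{1/2} \Sigma_{\mathbf{y}} \Sigma_{\mathbf{x}}^{1/2} )^{1/2} ]$ from above.

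The key step is to set $M = \Sigma_{\mathbf{x}}^{1/2} \Sigma_{\mathbf{y}} \Sigma_{\mathbf{x}}^{1/2}$, which is positive definite since it is a congruence of the positive definite $\Sigma_{\mathbf{y}}$ by the invertible $\Sigma_{\mathbf{x}}^{1/2}$, and to apply Proposition~\ref{half} to $M$. Because $\Sigma_{\mathbf{x}} = \Lambda_{\mathbf{x}}$ is diagonal, $\Sigma_{\mathbf{x}}^{1/2}$ is the diagonal matrix with entries $\sigma_{\mathbf{x} \left( i \right)}$, so that the off-diagonal structure of $M$ is inherited from $\Sigma_{\mathbf{y}}$ while its diagonal entries are exactly $M_{ii} = \sigma_{\mathbf{x} \left( i \right)}^2 \sigma_{\mathbf{y} \left( i \right)}^2$. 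Proposition~\ref{half} then yields $\tr ( M^{1/2} ) \leq \sum_{i=1}^m ( M_{ii} )^{1/2} = \sum_{i=1}^m [ \sigma_{\mathbf{x} \left( i \right)}^2 \sigma_{\mathbf{y} \left( i \right)}^2 ]^{1/2}$, with equality if and only if $M$ is diagonal. Substituting this into the Gelbrich bound and grouping the sum term-by-term produces the first line of the asserted bound; the second line follows from the elementary scalar identity $a + b - 2\sqrt{ab} = ( \sqrt{a} - \sqrt{b} )^2$ applied with $a = \sigma_{\mathbf{x} \left( i \right)}^2$ and $b = \sigma_{\mathbf{y} \left( i \right)}^2$.

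For the equality condition I would invoke two independent facts. First, if $\mathbf{y}$ is elliptical with the same density generator as $\mathbf{x}$, then by Lemma~\ref{Gaussian} the Gelbrich inequality holds with equality, so the first inequality becomes the exact $\mathcal{W}_2$ expression. Second, if $\Sigma_{\mathbf{y}} = \Lambda_{\mathbf{y}}$ is diagonal, then $M = \Lambda_{\mathbf{x}}^{1/2} \Lambda_{\mathbf{y}} \Lambda_{\mathbf{x}}^{1/2}$ is itself diagonal, which makes the inequality supplied by Proposition~\ref{half} tight. Under both conditions the two inequalities collapse to equalities simultaneously, establishing the claimed equality case.

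The main obstacle, modest here, is recognizing that the matrix $M$ whose fractional-power trace appears in the Gelbrich bound is precisely of the form handled by Proposition~\ref{half}, and in particular verifying that the diagonal of $M$ equals the products $\sigma_{\mathbf{x} \left( i \right)}^2 \sigma_{\mathbf{y} \left( i \right)}^2$; this computation relies crucially on the diagonality of $\Sigma_{\mathbf{x}}$. Everything else is direct substitution and the scalar square identity.
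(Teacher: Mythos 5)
Your proposal is correct and follows essentially the same route as the paper's own proof: the Gelbrich bound, linearity of the trace, the observation that the diagonal of $\Sigma_{\mathbf{x}}^{\frac{1}{2}} \Sigma_{\mathbf{y}} \Sigma_{\mathbf{x}}^{\frac{1}{2}}$ consists of the products $\sigma_{\mathbf{x}\left(i\right)}^2 \sigma_{\mathbf{y}\left(i\right)}^2$ because $\Sigma_{\mathbf{x}}$ is diagonal, and an application of Proposition~\ref{half} to that matrix, with the same two-part equality analysis. Your remark that the congruence $\Sigma_{\mathbf{x}}^{\frac{1}{2}} \Sigma_{\mathbf{y}} \Sigma_{\mathbf{x}}^{\frac{1}{2}}$ is in fact positive definite is a slight sharpening of the paper's ``positive semi-definite,'' and is needed for Proposition~\ref{half} to apply as stated.
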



\begin{proof}
	It follows from Lemma~\ref{Gaussian} and Lemma~\ref{Gelbrich} that
	\begin{flalign}
	\mathcal{W}_2 \left( p_{\mathbf{x}} ; p_{\mathbf{y}} \right) 
    \geq \sqrt{ \tr \left[ \Sigma_{\mathbf{x}} + \Sigma_{\mathbf{y}} - 2 \left( \Sigma_{\mathbf{x}}^{\frac{1}{2}}  \Sigma_{\mathbf{y}}  \Sigma_{\mathbf{x}}^{\frac{1}{2}} \right)^{\frac{1}{2}} \right] } 
	,\nonumber
	\end{flalign}
	where equality holds if $\mathbf{y}$ is elliptical with the same density generator as $\mathbf{x}$.
	Note then that
	\begin{flalign} 
	& \sqrt{ \tr \left[ \Sigma_{\mathbf{x}} + \Sigma_{\mathbf{y}} - 2 \left( \Sigma_{\mathbf{x}}^{\frac{1}{2}}  \Sigma_{\mathbf{y}}  \Sigma_{\mathbf{x}}^{\frac{1}{2}} \right)^{\frac{1}{2}} \right] } \nonumber \\
	&~~~~ = \sqrt{ \tr \left[ \Lambda_{\mathbf{x}} + \Sigma_{\mathbf{y}} - 2 \left( \Lambda_{\mathbf{x}}^{\frac{1}{2}}  \Sigma_{\mathbf{y}}  \Lambda_{\mathbf{x}}^{\frac{1}{2}} \right)^{\frac{1}{2}} \right] } \nonumber \\
	&~~~~ = \sqrt{  \tr \left( \Lambda_{\mathbf{x}} \right) +  \tr \left(  \Sigma_{\mathbf{y}} \right) - 2  \tr \left[ \left( \Lambda_{\mathbf{x}}^{\frac{1}{2}}  \Sigma_{\mathbf{y}}  \Lambda_{\mathbf{x}}^{\frac{1}{2}} \right)^{\frac{1}{2}} \right]}
	,\nonumber
	\end{flalign}
	where
	\begin{flalign}
	\tr \left( \Lambda_{\mathbf{x}} \right) =
	\sum_{i=1}^{m}  \sigma_{\mathbf{x} \left( i \right)}^2, \nonumber
	\end{flalign}
	and
	\begin{flalign}
	\tr \left( \Sigma_{\mathbf{y}} \right) =
	\sum_{i=1}^{m}  \sigma_{\mathbf{y} \left( i \right)}^2. \nonumber
	\end{flalign}
	It then remains to prove that
	\begin{flalign}
	\tr \left[ \left( \Lambda_{\mathbf{x}}^{\frac{1}{2}}  \Sigma_{\mathbf{y}}  \Lambda_{\mathbf{x}}^{\frac{1}{2}} \right)^{\frac{1}{2}} \right] \leq \sum_{i=1}^{m}  \left[ \sigma_{\mathbf{x} \left( i \right)}^2 \sigma_{\mathbf{y} \left( i \right)}^2 \right]^{\frac{1}{2}}
	,\nonumber
	\end{flalign} 
	where equality holds if $\Sigma_{\mathbf{y}}$ is diagonal as
	\begin{flalign}
	\Sigma_{\mathbf{y}}
	= \Lambda_{\mathbf{y}}
	= \mathrm{diag} \left( \sigma_{\mathbf{y} \left( 1 \right)}^2, \ldots, \sigma_{\mathbf{y} \left( m \right)}^2 \right). \nonumber
	\end{flalign}
	To prove this, denote first that
	\begin{flalign}
	\Sigma_{\mathbf{y}} = 
	\begin{bmatrix} \sigma_{\mathbf{y} \left( 1 \right)}^2 & \sigma_{\mathbf{y} \left( 1 \right) \mathbf{y} \left( 2 \right)}^2 & \cdots & \sigma_{\mathbf{y} \left( 1 \right) \mathbf{y} \left( m \right)}^2 \\
	\sigma_{\mathbf{y} \left( 1 \right)\mathbf{y} \left( 2 \right)}^2 & \sigma_{\mathbf{y} \left( 2 \right)}^2 & \cdots & \sigma_{\mathbf{y} \left( 2 \right)\mathbf{y} \left( m \right)}^2 \\
	\vdots & \vdots & \ddots & \vdots \\
	\sigma_{\mathbf{y} \left( 1 \right)\mathbf{y} \left( m \right)}^2 & \sigma_{\mathbf{y} \left( 2 \right) \mathbf{y} \left( m \right)}^2 & \cdots & \sigma_{\mathbf{y} \left( m \right)}^2 
	\end{bmatrix}
	,\nonumber
	\end{flalign} 
	and accordingly,
	\begin{flalign}
	&\Lambda_{\mathbf{x}}^{\frac{1}{2}}  \Sigma_{\mathbf{y}}  \Lambda_{\mathbf{x}}^{\frac{1}{2}} \nonumber \\
	&= \Lambda_{\mathbf{x}}^{\frac{1}{2}}  
	\begin{bmatrix} \sigma_{\mathbf{y} \left( 1 \right)}^2 & \sigma_{\mathbf{y} \left( 1 \right) \mathbf{y} \left( 2 \right)}^2 & \cdots & \sigma_{\mathbf{y} \left( 1 \right) \mathbf{y} \left( m \right)}^2 \\
	\sigma_{\mathbf{y} \left( 1 \right)\mathbf{y} \left( 2 \right)}^2 & \sigma_{\mathbf{y} \left( 2 \right)}^2 & \cdots & \sigma_{\mathbf{y} \left( 2 \right)\mathbf{y} \left( m \right)}^2 \\
	\vdots & \vdots & \ddots & \vdots \\
	\sigma_{\mathbf{y} \left( 1 \right)\mathbf{y} \left( m \right)}^2 & \sigma_{\mathbf{y} \left( 2 \right) \mathbf{y} \left( m \right)}^2 & \cdots & \sigma_{\mathbf{y} \left( m \right)}^2 	\end{bmatrix} \Lambda_{\mathbf{x}}^{\frac{1}{2}} \nonumber \\
	&=   
	\begin{bmatrix} \sigma_{\mathbf{x} \left( 1 \right)}^2 \sigma_{\mathbf{y} \left( 1 \right)}^2 & \cdots & \sigma_{\mathbf{x} \left( 1 \right)} \sigma_{\mathbf{x} \left( m \right)} \sigma_{\mathbf{y} \left( 1 \right) \mathbf{y} \left( m \right)}^2 \\
	\vdots & \ddots & \vdots \\
	\sigma_{\mathbf{x} \left( 1 \right)} \sigma_{\mathbf{x} \left( m \right)}\sigma_{\mathbf{y} \left( 1 \right)\mathbf{y} \left( m \right)}^2 &  \cdots &
	\sigma_{\mathbf{x} \left( m \right)}^2 \sigma_{\mathbf{y} \left( m \right)}^2 	\end{bmatrix} 
	,\nonumber
	\end{flalign} 
	where
	\begin{flalign}
	\Lambda_{\mathbf{x}}^{\frac{1}{2}}
	&= \left[ \mathrm{diag} \left( \sigma_{\mathbf{x} \left( 1 \right)}^2, \ldots, \sigma_{\mathbf{x} \left( m \right)}^2 \right) \right]^{\frac{1}{2}} \nonumber \\
	&= \mathrm{diag} \left( \sqrt{\sigma_{\mathbf{x} \left( 1 \right)}^2}, \ldots, \sqrt{\sigma_{\mathbf{x} \left( m \right)}^2} \right)
	\nonumber \\
	&= \mathrm{diag} \left( \sigma_{\mathbf{x} \left( 1 \right)}, \ldots, \sigma_{\mathbf{x} \left( m \right)} \right). \nonumber
	\end{flalign}
	It is clear that $\Lambda_{\mathbf{x}}^{\frac{1}{2}}  \Sigma_{\mathbf{y}}  \Lambda_{\mathbf{x}}^{\frac{1}{2}}$ is positive semi-definite.
	On the other hand, denote the eigenvalues of $\Lambda_{\mathbf{x}}^{\frac{1}{2}}  \Sigma_{\mathbf{y}}  \Lambda_{\mathbf{x}}^{\frac{1}{2}}$ as $\lambda_1, \ldots, \lambda_m$. Then,
	\begin{flalign}
	\tr \left[ \left( \Lambda_{\mathbf{x}}^{\frac{1}{2}}  \Sigma_{\mathbf{y}}  \Lambda_{\mathbf{x}}^{\frac{1}{2}} \right)^{\frac{1}{2}} \right] = \sum_{i=1}^{m} \lambda_i^{\frac{1}{2}}
	,\nonumber
	\end{flalign} 
	and it is known from Proposition~\ref{half} that
	\begin{flalign}
	\sum_{i=1}^{m} \lambda_i^{\frac{1}{2}} \leq \sum_{i=1}^{m}  \left[ \sigma_{\mathbf{x} \left( i \right)}^2 \sigma_{\mathbf{y} \left( i \right)}^2 \right]^{\frac{1}{2}}
	,\nonumber
	\end{flalign}  
	where equality holds if $\Lambda_{\mathbf{x}}^{\frac{1}{2}}  \Sigma_{\mathbf{y}}  \Lambda_{\mathbf{x}}^{\frac{1}{2}}$ is a diagonal matrix, i.e., 
	\begin{flalign}
	\sigma_{\mathbf{x} \left( i \right)} \sigma_{\mathbf{x} \left( j \right)} \sigma_{\mathbf{y} \left( i \right) \mathbf{y} \left( j \right) }^2
	= 0 \nonumber
	\end{flalign}
	for any $i, j = 1, \ldots, m; i \neq j$. Since $\Sigma_{\mathbf{x}}$ is positive definite (and hence $\sigma_{\mathbf{x} \left( i \right)} > 0,~\forall i = 1, \ldots, m$), this is equivalent to
	\begin{flalign}
	\sigma_{\mathbf{y} \left( i \right) \mathbf{y} \left( j \right) }^2
	= 0 \nonumber
	\end{flalign}
	for any $i, j = 1, \ldots, m; i \neq j$, i.e., $\Sigma_{\mathbf{y}}$ is diagonal as
	\begin{flalign}
	\Sigma_{\mathbf{y}}
	= \Lambda_{\mathbf{y}}
	= \mathrm{diag} \left( \sigma_{\mathbf{y} \left( 1 \right)}^2, \ldots, \sigma_{\mathbf{y} \left( m \right)}^2 \right). \nonumber
	\end{flalign}
	This completes the proof.
\end{proof}


Accordingly, we may examine the implications of Theorem~\ref{t1} for the Gelbrich bound, as shown in the following corollary; it is worth mentioning that herein $\mathbf{x}$ is not necessarily elliptical in the first place.

\begin{corollary} \label{c1}
		Consider $m$-dimensional random vectors $\mathbf{x}$ and $\mathbf{y}$ with positive definite covariance matrices $\Sigma_{\mathbf{x}}$ and $\Sigma_{\mathbf{y}}$, respectively. Suppose that $\Sigma_{\mathbf{x}}$ is a diagonal matrix, i.e.,
	\begin{flalign}
	\Sigma_{\mathbf{x}}
	= \Lambda_{\mathbf{x}}
	= \mathrm{diag} \left( \sigma_{\mathbf{x} \left( 1 \right)}^2, \ldots, \sigma_{\mathbf{x} \left( m \right)}^2 \right).
	\end{flalign}
	Denote the diagonal terms of $\Sigma_{\mathbf{y}}$ by $\sigma_{\mathbf{y} \left( 1 \right)}^2, \ldots, \sigma_{\mathbf{y} \left( m \right)}^2$, whereas $\Sigma_{\mathbf{y}}$ is not necessarily a diagonal matrix.
	Then,
	\begin{flalign} \label{b2}
	\mathcal{W}_2 \left( p_{\mathbf{x}} ; p_{\mathbf{y}} \right)
	& \geq \sqrt{\sum_{i=1}^{m} \left\{  \sigma_{\mathbf{x} \left( i \right)}^2 + \sigma_{\mathbf{y} \left( i \right)}^2 - 2 \left[ \sigma_{\mathbf{x} \left( i \right)}^2 \sigma_{\mathbf{y} \left( i \right)}^2 \right]^{\frac{1}{2}} \right\}} \nonumber \\
	& = \sqrt{\sum_{i=1}^{m} \left\{  \left[ \sigma_{\mathbf{x} \left( i \right)}^2  \right]^{\frac{1}{2}}
		- \left[  \sigma_{\mathbf{y} \left( i \right)}^2 \right]^{\frac{1}{2}} \right\}^2 }.
	\end{flalign}
\end{corollary}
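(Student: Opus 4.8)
The plan is to follow the proof of Theorem~\ref{t1} almost verbatim, the one substantive change being that, since $\mathbf{x}$ is no longer assumed elliptical, I cannot invoke the equality of Lemma~\ref{Gaussian} and must instead start from the generic lower bound of Lemma~\ref{Gelbrich}. Concretely, I would first apply Lemma~\ref{Gelbrich} to obtain
\[
\mathcal{W}_2 \left( p_{\mathbf{x}} ; p_{\mathbf{y}} \right)
\geq \sqrt{ \tr \left[ \Sigma_{\mathbf{x}} + \Sigma_{\mathbf{y}} - 2 \left( \Sigma_{\mathbf{x}}^{\frac{1}{2}} \Sigma_{\mathbf{y}} \Sigma_{\mathbf{x}}^{\frac{1}{2}} \right)^{\frac{1}{2}} \right] }.
\]
Substituting $\Sigma_{\mathbf{x}} = \Lambda_{\mathbf{x}}$ and expanding the trace by linearity, the radicand becomes $\tr(\Lambda_{\mathbf{x}}) + \tr(\Sigma_{\mathbf{y}}) - 2 \tr[(\Lambda_{\mathbf{x}}^{\frac{1}{2}} \Sigma_{\mathbf{y}} \Lambda_{\mathbf{x}}^{\frac{1}{2}})^{\frac{1}{2}}]$, where $\tr(\Lambda_{\mathbf{x}}) = \sum_{i} \sigma_{\mathbf{x} \left( i \right)}^2$ and $\tr(\Sigma_{\mathbf{y}}) = \sum_{i} \sigma_{\mathbf{y} \left( i \right)}^2$, exactly as in the proof of Theorem~\ref{t1}.

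Next I would apply Proposition~\ref{half} to the matrix $\Lambda_{\mathbf{x}}^{\frac{1}{2}} \Sigma_{\mathbf{y}} \Lambda_{\mathbf{x}}^{\frac{1}{2}}$, which is positive definite because $\Sigma_{\mathbf{y}}$ is positive definite and $\Lambda_{\mathbf{x}}^{\frac{1}{2}}$ is invertible. Its $(i,i)$ diagonal entry is $\sigma_{\mathbf{x} \left( i \right)} \sigma_{\mathbf{y} \left( i \right)}^2 \sigma_{\mathbf{x} \left( i \right)} = \sigma_{\mathbf{x} \left( i \right)}^2 \sigma_{\mathbf{y} \left( i \right)}^2$, so Proposition~\ref{half} gives
\[
\tr \left[ \left( \Lambda_{\mathbf{x}}^{\frac{1}{2}} \Sigma_{\mathbf{y}} \Lambda_{\mathbf{x}}^{\frac{1}{2}} \right)^{\frac{1}{2}} \right] \leq \sum_{i=1}^{m} \left[ \sigma_{\mathbf{x} \left( i \right)}^2 \sigma_{\mathbf{y} \left( i \right)}^2 \right]^{\frac{1}{2}}.
\]
Because this trace enters the radicand with the coefficient $-2$, the upper bound on it propagates to a lower bound on the whole expression; collecting terms yields $\sum_{i} \{ \sigma_{\mathbf{x} \left( i \right)}^2 + \sigma_{\mathbf{y} \left( i \right)}^2 - 2 [ \sigma_{\mathbf{x} \left( i \right)}^2 \sigma_{\mathbf{y} \left( i \right)}^2 ]^{\frac{1}{2}} \}$, which factors as $\sum_{i} \{ [\sigma_{\mathbf{x} \left( i \right)}^2]^{\frac{1}{2}} - [\sigma_{\mathbf{y} \left( i \right)}^2]^{\frac{1}{2}} \}^2$, establishing \eqref{b2}.

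The only point requiring care is the direction of the inequalities: Proposition~\ref{half} supplies an \emph{upper} bound on a trace that carries a negative sign inside the square root, so I must verify that combining it with the Gelbrich inequality preserves the overall $\geq$ direction. Unlike Theorem~\ref{t1}, I would make \emph{no} equality claim here, since Lemma~\ref{Gelbrich} holds only with inequality for distributions that are not necessarily elliptical; hence the statement is a pure lower bound, and this is precisely the single point where the argument departs from that of Theorem~\ref{t1}.
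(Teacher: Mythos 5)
Your proposal is correct and follows essentially the same route as the paper: the paper's own proof of Corollary~\ref{c1} simply invokes the trace inequality already established inside the proof of Theorem~\ref{t1} (which is exactly the application of Proposition~\ref{half} to $\Lambda_{\mathbf{x}}^{\frac{1}{2}} \Sigma_{\mathbf{y}} \Lambda_{\mathbf{x}}^{\frac{1}{2}}$ that you re-derive) and then concludes via the Gelbrich bound of Lemma~\ref{Gelbrich}. Your handling of the inequality directions and your decision to drop the equality claim match the paper's treatment.
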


\vspace{3mm}

\begin{proof}
	It is known from the proof of Theorem~\ref{t1} that
	\begin{flalign}
	&\sqrt{ \tr \left[ \Sigma_{\mathbf{x}} + \Sigma_{\mathbf{y}} - 2 \left( \Sigma_{\mathbf{x}}^{\frac{1}{2}}  \Sigma_{\mathbf{y}}  \Sigma_{\mathbf{x}}^{\frac{1}{2}} \right)^{\frac{1}{2}} \right] } \nonumber \\
	&~~~~ \geq \sqrt{\sum_{i=1}^{m} \left\{  \sigma_{\mathbf{x} \left( i \right)}^2 + \sigma_{\mathbf{y} \left( i \right)}^2 - 2 \left[ \sigma_{\mathbf{x} \left( i \right)}^2 \sigma_{\mathbf{y} \left( i \right)}^2 \right]^{\frac{1}{2}} \right\}} \nonumber \\
	&~~~~ = \sqrt{\sum_{i=1}^{m} \left\{  \left[ \sigma_{\mathbf{x} \left( i \right)}^2  \right]^{\frac{1}{2}}
		- \left[  \sigma_{\mathbf{y} \left( i \right)}^2 \right]^{\frac{1}{2}} \right\}^2 }. \nonumber
	\end{flalign}
	As such, \eqref{b2} follows directly from the Gelbrich bound.
\end{proof}

More generally, we may consider the cases when $\Sigma_{\mathbf{x}}$ is not necessarily diagonal, i.e., when $\mathbf{x}$ is not necessarily independent element-wise.

\begin{theorem} \label{t2}
	Consider $m$-dimensional random vectors $\mathbf{x}$ and $\mathbf{y}$ with positive definite covariance matrices $\Sigma_{\mathbf{x}}$ and $\Sigma_{\mathbf{y}}$, respectively. Suppose that $\mathbf{x}$ is elliptically distributed with density generator $g_\mathbf{x} \left( \mathbf{u} \right)$, whereas $\mathbf{y}$ is not necessarily elliptical. Denote the eigen-decomposition of $\Sigma_{\mathbf{x}}$ as
	\begin{flalign}
	\Sigma_{\mathbf{x}}
	= U_{\mathbf{x}} \Lambda_{\mathbf{x}} U_{\mathbf{x}}^{\mathrm{T}},
	\end{flalign}
	where
	\begin{flalign}
	\Lambda_{\mathbf{x}}
	= \mathrm{diag} \left( \lambda_1, \ldots, \lambda_m \right).
	\end{flalign}
	Meanwhile, denote
	\begin{flalign}
	\overline{\Sigma}_{\mathbf{y}}
	= U_{\mathbf{x}}^{\mathrm{T}} \Sigma_{\mathbf{y}} U_{\mathbf{x}},
	\end{flalign}
	and denote the diagonal terms of $\overline{\Sigma}_{\mathbf{y}}$ by $\overline{\sigma}_{\mathbf{y} \left( 1 \right)}^2, \ldots, \overline{\sigma}_{\mathbf{y} \left( m \right)}^2$, whereas $\overline{\Sigma}_{\mathbf{y}}$ is not necessarily diagonal; note also that $\Sigma_{\mathbf{y}}$ is not necessarily diagonal in the first place.
	Then,
	\begin{flalign}
	\mathcal{W}_2 \left( p_{\mathbf{x}} ; p_{\mathbf{y}} \right)
	& \geq \sqrt{\sum_{i=1}^{m} \left\{ \lambda_i + \overline{\sigma}_{\mathbf{y} \left( i \right)}^2 - 2 \left[ \lambda_i \overline{\sigma}_{\mathbf{y} \left( i \right)}^2 \right]^{\frac{1}{2}} \right\}} \nonumber \\
	& = \sqrt{\sum_{i=1}^{m} \left\{   \lambda_i^{\frac{1}{2}}
		- \left[  \overline{\sigma}_{\mathbf{y} \left( i \right)}^2 \right]^{\frac{1}{2}} \right\}^2 },
	\end{flalign}
	where equality holds if $U_{\mathbf{x}}^{\mathrm{T}} \mathbf{y}$ is elliptical with the same density generator as $\mathbf{x}$, i.e., $g_\mathbf{y} \left( \mathbf{u} \right) = g_\mathbf{x} \left( \mathbf{u} \right)$, while $\Sigma_{\mathbf{y}}$ is given by
	\begin{flalign}
	\Sigma_{\mathbf{y}}
	= U_{\mathbf{x}} \left[ \mathrm{diag} \left( \overline{\sigma}_{\mathbf{y} \left( 1 \right)}^2, \ldots, \overline{\sigma}_{\mathbf{y} \left( m \right)}^2 \right) \right]   U_{\mathbf{x}}^{\mathrm{T}}.
	\end{flalign}
\end{theorem}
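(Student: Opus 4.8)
The plan is to reduce Theorem~\ref{t2} to Theorem~\ref{t1} by an orthogonal change of coordinates that diagonalizes $\Sigma_{\mathbf{x}}$. The essential ingredient is the rotational invariance of the $\mathcal{W}_2$ distance: for any orthogonal matrix $U \in \mathbb{R}^{m \times m}$ and any random vectors $\mathbf{x}, \mathbf{y}$, one has $\mathcal{W}_2 \left( p_{U^{\mathrm{T}} \mathbf{x}} ; p_{U^{\mathrm{T}} \mathbf{y}} \right) = \mathcal{W}_2 \left( p_{\mathbf{x}} ; p_{\mathbf{y}} \right)$. This holds because the map $\left( \mathbf{x}, \mathbf{y} \right) \mapsto \left( U^{\mathrm{T}} \mathbf{x}, U^{\mathrm{T}} \mathbf{y} \right)$ is a bijection between admissible couplings that leaves the transport cost unchanged, since $\left\| U^{\mathrm{T}} \mathbf{x} - U^{\mathrm{T}} \mathbf{y} \right\|^2 = \left( \mathbf{x} - \mathbf{y} \right)^{\mathrm{T}} U U^{\mathrm{T}} \left( \mathbf{x} - \mathbf{y} \right) = \left\| \mathbf{x} - \mathbf{y} \right\|^2$ for orthogonal $U$.

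First I would set $\mathbf{x}' = U_{\mathbf{x}}^{\mathrm{T}} \mathbf{x}$ and $\mathbf{y}' = U_{\mathbf{x}}^{\mathrm{T}} \mathbf{y}$. Then $\mathbf{x}'$ has covariance $U_{\mathbf{x}}^{\mathrm{T}} \Sigma_{\mathbf{x}} U_{\mathbf{x}} = \Lambda_{\mathbf{x}} = \mathrm{diag} \left( \lambda_1, \ldots, \lambda_m \right)$, which is diagonal, while $\mathbf{y}'$ has covariance $U_{\mathbf{x}}^{\mathrm{T}} \Sigma_{\mathbf{y}} U_{\mathbf{x}} = \overline{\Sigma}_{\mathbf{y}}$, whose diagonal entries are precisely $\overline{\sigma}_{\mathbf{y} \left( 1 \right)}^2, \ldots, \overline{\sigma}_{\mathbf{y} \left( m \right)}^2$. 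Next I would invoke the fact that ellipticity with a fixed density generator is preserved under invertible linear transformations: if $\mathbf{x}$ is elliptical with generator $g_{\mathbf{x}}$, then $\mathbf{x}' = U_{\mathbf{x}}^{\mathrm{T}} \mathbf{x}$ is again elliptical with the very same generator $g_{\mathbf{x}}$, only its scatter matrix being conjugated by $U_{\mathbf{x}}^{\mathrm{T}}$. Thus the pair $\left( \mathbf{x}', \mathbf{y}' \right)$ meets every hypothesis of Theorem~\ref{t1}, with $\mathbf{x}'$ elliptical of diagonal covariance $\Lambda_{\mathbf{x}}$ and $\mathbf{y}'$ arbitrary with diagonal covariance terms $\overline{\sigma}_{\mathbf{y} \left( i \right)}^2$.

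Applying Theorem~\ref{t1} to $\mathbf{x}'$ and $\mathbf{y}'$ and then using rotational invariance to replace $\mathcal{W}_2 \left( p_{\mathbf{x}'} ; p_{\mathbf{y}'} \right)$ by $\mathcal{W}_2 \left( p_{\mathbf{x}} ; p_{\mathbf{y}} \right)$, I obtain the lower bound $\sqrt{\sum_{i=1}^{m} \{ \lambda_i + \overline{\sigma}_{\mathbf{y} \left( i \right)}^2 - 2 [ \lambda_i \overline{\sigma}_{\mathbf{y} \left( i \right)}^2 ]^{1/2} \}}$, which is exactly the claimed expression (the identities $\sigma_{\mathbf{x}' \left( i \right)}^2 = \lambda_i$ and $\sigma_{\mathbf{y}' \left( i \right)}^2 = \overline{\sigma}_{\mathbf{y} \left( i \right)}^2$ match the two forms in the statement). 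For the equality case, Theorem~\ref{t1} yields equality precisely when $\mathbf{y}'$ is elliptical with generator $g_{\mathbf{x}}$ and $\overline{\Sigma}_{\mathbf{y}}$ is diagonal; translating back via $\mathbf{y} = U_{\mathbf{x}} \mathbf{y}'$ gives the stated condition that $U_{\mathbf{x}}^{\mathrm{T}} \mathbf{y}$ be elliptical with $g_{\mathbf{y}} \left( \mathbf{u} \right) = g_{\mathbf{x}} \left( \mathbf{u} \right)$ and $\Sigma_{\mathbf{y}} = U_{\mathbf{x}} [ \mathrm{diag} ( \overline{\sigma}_{\mathbf{y} \left( 1 \right)}^2, \ldots, \overline{\sigma}_{\mathbf{y} \left( m \right)}^2 ) ] U_{\mathbf{x}}^{\mathrm{T}}$.

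The only step that is not a direct bookkeeping consequence of Theorem~\ref{t1} is justifying the two invariance facts used in the change of variables, namely the rotational invariance of $\mathcal{W}_2$ and the invariance of the density generator under orthogonal conjugation of an elliptical law. I expect these to be the main point requiring care; both are standard, but they are what makes the reduction legitimate, and once they are in place the remainder of the argument is purely substitution into Theorem~\ref{t1}.
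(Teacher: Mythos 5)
Your proof is correct, and it rests on the same underlying idea as the paper's: conjugate by the eigenbasis $U_{\mathbf{x}}$ of $\Sigma_{\mathbf{x}}$ so as to reduce Theorem~\ref{t2} to the diagonal case of Theorem~\ref{t1}. The implementation differs, though, in a way worth noting. You perform the reduction at the level of the random vectors, setting $\mathbf{x}' = U_{\mathbf{x}}^{\mathrm{T}}\mathbf{x}$, $\mathbf{y}' = U_{\mathbf{x}}^{\mathrm{T}}\mathbf{y}$, and then invoke Theorem~\ref{t1} as a black box; this requires two auxiliary facts not stated anywhere in the paper, namely the invariance of $\mathcal{W}_2$ under a common orthogonal transformation of both marginals and the invariance of the density generator of an elliptical law under orthogonal conjugation. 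Both are standard and your justifications (the coupling bijection preserving $\left\| \mathbf{x} - \mathbf{y} \right\|$, and the transformation rule for the scatter matrix) are adequate, but you correctly identify them as the load-bearing steps. The paper instead performs the reduction at the level of the matrix formula: it writes $\Sigma_{\mathbf{x}}^{\frac{1}{2}} = U_{\mathbf{x}} \Lambda_{\mathbf{x}}^{\frac{1}{2}} U_{\mathbf{x}}^{\mathrm{T}}$, uses the cyclic property of the trace and the identity $\left( U A U^{\mathrm{T}} \right)^{\frac{1}{2}} = U A^{\frac{1}{2}} U^{\mathrm{T}}$ to show
\begin{flalign}
\tr \left[ \Sigma_{\mathbf{x}} + \Sigma_{\mathbf{y}} - 2 \left( \Sigma_{\mathbf{x}}^{\frac{1}{2}}  \Sigma_{\mathbf{y}}  \Sigma_{\mathbf{x}}^{\frac{1}{2}} \right)^{\frac{1}{2}} \right]
= \tr \left[ \Lambda_{\mathbf{x}} + \overline{\Sigma}_{\mathbf{y}} - 2 \left( \Lambda_{\mathbf{x}}^{\frac{1}{2}}  \overline{\Sigma}_{\mathbf{y}}  \Lambda_{\mathbf{x}}^{\frac{1}{2}} \right)^{\frac{1}{2}} \right], \nonumber
\end{flalign}
and then reuses the inequality established inside the proof of Theorem~\ref{t1} (i.e., Proposition~\ref{half} applied to $\Lambda_{\mathbf{x}}^{\frac{1}{2}} \overline{\Sigma}_{\mathbf{y}} \Lambda_{\mathbf{x}}^{\frac{1}{2}}$). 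The paper's route needs no statement about transformed distributions for the inequality itself, since the Gelbrich bound and Lemma~\ref{Gaussian} depend only on covariances; your route is more probabilistic and arguably more transparent about why the reduction works, at the cost of having to supply the two invariance lemmas. Both are valid, and for the equality case even the paper implicitly relies on the same generator-invariance fact you make explicit.
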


\vspace{3mm}

\begin{proof}
	Note first that when \begin{flalign}
	\Sigma_{\mathbf{x}}
	= U_{\mathbf{x}} \Lambda_{\mathbf{x}} U_{\mathbf{x}}^{\mathrm{T}}, \nonumber
	\end{flalign}
	it can be verified that
	\begin{flalign}
	\Sigma_{\mathbf{x}}^{\frac{1}{2}}
	= \left( U_{\mathbf{x}} \Lambda_{\mathbf{x}} U_{\mathbf{x}}^{\mathrm{T}} \right)^{\frac{1}{2}}
	= U_{\mathbf{x}} \Lambda_{\mathbf{x}}^{\frac{1}{2}} U_{\mathbf{x}}^{\mathrm{T}}, \nonumber
	\end{flalign}
	and hence
	\begin{flalign}
	&\sqrt{ \tr \left[ \Sigma_{\mathbf{x}} + \Sigma_{\mathbf{y}} - 2 \left( \Sigma_{\mathbf{x}}^{\frac{1}{2}}  \Sigma_{\mathbf{y}}  \Sigma_{\mathbf{x}}^{\frac{1}{2}} \right)^{\frac{1}{2}} \right] } \nonumber \\
	& = \sqrt{ \tr \left[ U_{\mathbf{x}} \Lambda_{\mathbf{x}} U_{\mathbf{x}}^{\mathrm{T}} + \Sigma_{\mathbf{y}} - 2 \left( U_{\mathbf{x}} \Lambda_{\mathbf{x}}^{\frac{1}{2}} U_{\mathbf{x}}^{\mathrm{T}} \Sigma_{\mathbf{y}}  U_{\mathbf{x}} \Lambda_{\mathbf{x}}^{\frac{1}{2}} U_{\mathbf{x}}^{\mathrm{T}}\right) \right] } \nonumber \\
	& = \sqrt{ \tr \left\{ U_{\mathbf{x}} \left[  \Lambda_{\mathbf{x}} + U_{\mathbf{x}}^{\mathrm{T}}  \Sigma_{\mathbf{y}} U_{\mathbf{x}} - 2 \left(  \Lambda_{\mathbf{x}}^{\frac{1}{2}} U_{\mathbf{x}}^{\mathrm{T}} \Sigma_{\mathbf{y}}  U_{\mathbf{x}} \Lambda_{\mathbf{x}}^{\frac{1}{2}} \right) \right]  U_{\mathbf{x}}^{\mathrm{T}} \right\} } \nonumber \\
	& = \sqrt{ \tr \left\{ \left[  \Lambda_{\mathbf{x}} + U_{\mathbf{x}}^{\mathrm{T}}  \Sigma_{\mathbf{y}} U_{\mathbf{x}} - 2 \left(  \Lambda_{\mathbf{x}}^{\frac{1}{2}} U_{\mathbf{x}}^{\mathrm{T}} \Sigma_{\mathbf{y}}  U_{\mathbf{x}} \Lambda_{\mathbf{x}}^{\frac{1}{2}} \right) \right] U_{\mathbf{x}}^{\mathrm{T}}U_{\mathbf{x}}   \right\} } \nonumber \\
	& = \sqrt{ \tr \left[  \Lambda_{\mathbf{x}} + U_{\mathbf{x}}^{\mathrm{T}} \Sigma_{\mathbf{y}}  U_{\mathbf{x}} - 2 \left(  \Lambda_{\mathbf{x}}^{\frac{1}{2}} U_{\mathbf{x}}^{\mathrm{T}} \Sigma_{\mathbf{y}}  U_{\mathbf{x}} \Lambda_{\mathbf{x}}^{\frac{1}{2}} \right) \right] } \nonumber \\
	& = \sqrt{ \tr \left[  \Lambda_{\mathbf{x}} + \overline{\Sigma}_{\mathbf{y}} - 2 \left(  \Lambda_{\mathbf{x}}^{\frac{1}{2}} \overline{\Sigma}_{\mathbf{y}} \Lambda_{\mathbf{x}}^{\frac{1}{2}} \right) \right] }, \nonumber
	\end{flalign}
	where 
	\begin{flalign}
	\overline{\Sigma}_{\mathbf{y}}
	= U_{\mathbf{x}}^{\mathrm{T}} \Sigma_{\mathbf{y}} U_{\mathbf{x}}. \nonumber
	\end{flalign}
	Note also that $\overline{\Sigma}_{\mathbf{y}}$ is essentially the covariance of $U_{\mathbf{x}}^{\mathrm{T}} \mathbf{y}$.
	Then, it is known from the proof of Theorem~\ref{t1} that
	\begin{flalign}
	&\sqrt{ \tr \left[  \Lambda_{\mathbf{x}} + \overline{\Sigma}_{\mathbf{y}} - 2 \left(  \Lambda_{\mathbf{x}}^{\frac{1}{2}} \overline{\Sigma}_{\mathbf{y}} \Lambda_{\mathbf{x}}^{\frac{1}{2}} \right) \right] } \nonumber \\
	&~~~~ \geq \sqrt{\sum_{i=1}^{m} \left\{ \lambda_i + \overline{\sigma}_{\mathbf{y} \left( i \right)}^2 - 2 \left[ \lambda_i \overline{\sigma}_{\mathbf{y} \left( i \right)}^2 \right]^{\frac{1}{2}} \right\}} \nonumber \\
	&~~~~ = \sqrt{\sum_{i=1}^{m} \left\{  \lambda_i^{\frac{1}{2}}
		- \left[  \overline{\sigma}_{\mathbf{y} \left( i \right)}^2 \right]^{\frac{1}{2}} \right\}^2 }, \nonumber
	\end{flalign}
	where equality holds if $U_{\mathbf{x}}^{\mathrm{T}} \mathbf{y}$ is elliptical with the same density generator as $\mathbf{x}$, i.e., $g_\mathbf{y} \left( \mathbf{u} \right) = g_\mathbf{x} \left( \mathbf{u} \right)$, while $\overline{\Sigma}_{\mathbf{y}}$ is diagonal as
	\begin{flalign}
	\overline{\Sigma}_{\mathbf{y}}
	= \mathrm{diag} \left( \overline{\sigma}_{\mathbf{y} \left( 1 \right)}^2, \ldots, \overline{\sigma}_{\mathbf{y} \left( m \right)}^2 \right), \nonumber
	\end{flalign}
	or equivalently,
	\begin{flalign}
	\Sigma_{\mathbf{y}}
	= U_{\mathbf{x}} 
	\overline{\Sigma}_{\mathbf{y}} U_{\mathbf{x}}^{\mathrm{T}}
	= U_{\mathbf{x}} \left[ \mathrm{diag} \left( \overline{\sigma}_{\mathbf{y} \left( 1 \right)}^2, \ldots, \overline{\sigma}_{\mathbf{y} \left( m \right)}^2 \right) \right]   U_{\mathbf{x}}^{\mathrm{T}}. \nonumber
	\end{flalign}
	This concludes the proof.
\end{proof}

Correspondingly, we may examine what Theorem~\ref{t2} implicates for the Gelbrich bound for the general case where  $\mathbf{x}$ is not necessarily elliptical nor independent element-wise.

\begin{corollary} \label{c2}
	Consider $m$-dimensional random vectors $\mathbf{x}$ and $\mathbf{y}$ with positive definite covariance matrices $\Sigma_{\mathbf{x}}$ and $\Sigma_{\mathbf{y}}$, respectively. Denote the eigen-decomposition of $\Sigma_{\mathbf{x}}$ as
	\begin{flalign}
	\Sigma_{\mathbf{x}}
	= U_{\mathbf{x}} \Lambda_{\mathbf{x}} U_{\mathbf{x}}^{\mathrm{T}},
	\end{flalign}
	where
	\begin{flalign}
	\Lambda_{\mathbf{x}}
	= \mathrm{diag} \left( \lambda_1, \ldots, \lambda_m \right).
	\end{flalign}
	Meanwhile, denote
	\begin{flalign}
	\overline{\Sigma}_{\mathbf{y}}
	= U_{\mathbf{x}}^{\mathrm{T}} \Sigma_{\mathbf{y}} U_{\mathbf{x}},
	\end{flalign}
	and denote the diagonal terms of $\overline{\Sigma}_{\mathbf{y}}$ by $\overline{\sigma}_{\mathbf{y} \left( 1 \right)}^2, \ldots, \overline{\sigma}_{\mathbf{y} \left( m \right)}^2$, whereas $\overline{\Sigma}_{\mathbf{y}}$ is not necessarily a diagonal matrix.
	Then,
	\begin{flalign}
	\mathcal{W}_2 \left( p_{\mathbf{x}} ; p_{\mathbf{y}} \right)
	& \geq \sqrt{\sum_{i=1}^{m} \left\{ \lambda_i + \overline{\sigma}_{\mathbf{y} \left( i \right)}^2 - 2 \left[ \lambda_i \overline{\sigma}_{\mathbf{y} \left( i \right)}^2 \right]^{\frac{1}{2}} \right\}} \nonumber \\
	& = \sqrt{\sum_{i=1}^{m} \left\{   \lambda_i^{\frac{1}{2}}
		- \left[  \overline{\sigma}_{\mathbf{y} \left( i \right)}^2 \right]^{\frac{1}{2}} \right\}^2 }.
	\end{flalign}
\end{corollary}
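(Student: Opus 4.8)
The plan is to mirror the way Corollary~\ref{c1} follows from Theorem~\ref{t1}: the entire chain of manipulations in the proof of Theorem~\ref{t2} is purely algebraic and never invokes the ellipticity of $\mathbf{x}$ except at the single point where Lemma~\ref{Gaussian} turns the $\mathcal{W}_2$ distance into an equality. Removing that assumption therefore costs us only the equality claim, while the inequality survives intact and is now supplied by the Gelbrich bound.

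First I would apply Lemma~\ref{Gelbrich}, which holds for arbitrary (not necessarily elliptical) $\mathbf{x}$ and $\mathbf{y}$, to obtain
\begin{flalign}
\mathcal{W}_2 \left( p_{\mathbf{x}} ; p_{\mathbf{y}} \right) \geq \sqrt{ \tr \left[ \Sigma_{\mathbf{x}} + \Sigma_{\mathbf{y}} - 2 \left( \Sigma_{\mathbf{x}}^{\frac{1}{2}}  \Sigma_{\mathbf{y}}  \Sigma_{\mathbf{x}}^{\frac{1}{2}} \right)^{\frac{1}{2}} \right] }. \nonumber
\end{flalign}
Next I would reuse verbatim the orthogonal reduction established inside the proof of Theorem~\ref{t2}: substituting $\Sigma_{\mathbf{x}}^{\frac{1}{2}} = U_{\mathbf{x}} \Lambda_{\mathbf{x}}^{\frac{1}{2}} U_{\mathbf{x}}^{\mathrm{T}}$ and exploiting the cyclic invariance of the trace together with $U_{\mathbf{x}}^{\mathrm{T}} U_{\mathbf{x}} = I$, the right-hand side equals
\begin{flalign}
\sqrt{ \tr \left[  \Lambda_{\mathbf{x}} + \overline{\Sigma}_{\mathbf{y}} - 2 \left(  \Lambda_{\mathbf{x}}^{\frac{1}{2}} \overline{\Sigma}_{\mathbf{y}} \Lambda_{\mathbf{x}}^{\frac{1}{2}} \right)^{\frac{1}{2}} \right] }, \nonumber
\end{flalign}
where $\overline{\Sigma}_{\mathbf{y}} = U_{\mathbf{x}}^{\mathrm{T}} \Sigma_{\mathbf{y}} U_{\mathbf{x}}$.

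Finally, since $\Lambda_{\mathbf{x}}$ is diagonal with entries $\lambda_1, \ldots, \lambda_m$, I would invoke the diagonal estimate already proved within Theorem~\ref{t1}, which itself rests on applying Proposition~\ref{half} to the positive semidefinite matrix $\Lambda_{\mathbf{x}}^{\frac{1}{2}} \overline{\Sigma}_{\mathbf{y}} \Lambda_{\mathbf{x}}^{\frac{1}{2}}$, namely $\tr [ ( \Lambda_{\mathbf{x}}^{\frac{1}{2}} \overline{\Sigma}_{\mathbf{y}} \Lambda_{\mathbf{x}}^{\frac{1}{2}} )^{\frac{1}{2}} ] \leq \sum_{i=1}^{m} [ \lambda_i \overline{\sigma}_{\mathbf{y} \left( i \right)}^2 ]^{\frac{1}{2}}$; combining this with $\tr ( \Lambda_{\mathbf{x}} ) = \sum_i \lambda_i$ and $\tr ( \overline{\Sigma}_{\mathbf{y}} ) = \sum_i \overline{\sigma}_{\mathbf{y} \left( i \right)}^2$ and then completing the square inside the radical yields the stated lower bound. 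The one conceptual point worth flagging — and it is not really an obstacle — is that the equality certificate of Theorem~\ref{t2} has disappeared: there equality required both that $\overline{\Sigma}_{\mathbf{y}}$ be diagonal and that $U_{\mathbf{x}}^{\mathrm{T}} \mathbf{y}$ be elliptical with generator $g_{\mathbf{x}}$, the latter being exactly what lets Lemma~\ref{Gaussian} hold with equality rather than as the Gelbrich inequality. Here $\mathbf{x}$ is no longer assumed elliptical, so no such certificate is available and the corollary correctly asserts only the inequality, precisely as Corollary~\ref{c1} stands to Theorem~\ref{t1}.
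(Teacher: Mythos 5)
Your proof is correct and follows exactly the route the paper intends: the paper itself gives no explicit proof of Corollary~\ref{c2}, leaving it to parallel the proof of Corollary~\ref{c1}, namely invoking the Gelbrich bound (Lemma~\ref{Gelbrich}) in place of Lemma~\ref{Gaussian} and then reusing the orthogonal reduction and the trace inequality already established in the proofs of Theorem~\ref{t2} and Theorem~\ref{t1}. Your remark that the equality certificate is necessarily lost once $\mathbf{x}$ is no longer assumed elliptical is also consistent with the paper, which states Corollary~\ref{c2} as an inequality only.
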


\vspace{3mm}

\section{Conclusion}

We have presented a property of the $\mathcal{W}_2$ Wasserstein distance: Independent elliptical distributions minimize their $\mathcal{W}_2$ Wasserstein distance from given independent elliptical distributions with the same density generators. We have also examined the implications of this property in the Gelbrich bound when the distributions are not necessarily elliptical, while generalizing the results to the cases when the distributions are not independent. It might be interesting to further examine the implications of this property.

\balance

\bibliographystyle{IEEEtran}
\bibliography{references}

\begin{thebibliography}{1}
\providecommand{\url}[1]{#1}
\csname url@samestyle\endcsname
\providecommand{\newblock}{\relax}
\providecommand{\bibinfo}[2]{#2}
\providecommand{\BIBentrySTDinterwordspacing}{\spaceskip=0pt\relax}
\providecommand{\BIBentryALTinterwordstretchfactor}{4}
\providecommand{\BIBentryALTinterwordspacing}{\spaceskip=\fontdimen2\font plus
\BIBentryALTinterwordstretchfactor\fontdimen3\font minus
  \fontdimen4\font\relax}
\providecommand{\BIBforeignlanguage}[2]{{%
\expandafter\ifx\csname l@#1\endcsname\relax
\typeout{** WARNING: IEEEtran.bst: No hyphenation pattern has been}%
\typeout{** loaded for the language `#1'. Using the pattern for}%
\typeout{** the default language instead.}%
\else
\language=\csname l@#1\endcsname
\fi
#2}}
\providecommand{\BIBdecl}{\relax}
\BIBdecl

\bibitem{peyre2019computational}
G.~Peyr{\'e} and M.~Cuturi, ``Computational optimal transport: With
  applications to data science,'' \emph{Foundations and Trends{\textregistered}
  in Machine Learning}, vol.~11, no. 5-6, pp. 355--607, 2019.

\bibitem{panaretos2020invitation}
V.~M. Panaretos and Y.~Zemel, \emph{An Invitation to Statistics in Wasserstein
  Space}.\hskip 1em plus 0.5em minus 0.4em\relax Springer, 2020.

\bibitem{villani2003topics}
C.~Villani, \emph{Topics in Optimal Transportation}.\hskip 1em plus 0.5em minus
  0.4em\relax American Mathematical Society, 2003.

\bibitem{villani2008optimal}
------, \emph{Optimal Transport: Old and New}.\hskip 1em plus 0.5em minus
  0.4em\relax Springer, 2008.

\bibitem{santambrogio2015optimal}
F.~Santambrogio, \emph{Optimal Transport for Applied Mathematicians}.\hskip 1em
  plus 0.5em minus 0.4em\relax Birk{\"a}user, Springer, 2015.

\bibitem{KLproperties}
S.~Fang and Q.~Zhu, ``Independent {G}aussian distributions minimize the
  {K}ullback--{L}eibler ({KL}) divergence from independent {G}aussian
  distributions,'' \emph{arXiv preprint arXiv: 2011.02560}, 2020.

\bibitem{fang2017towards}
S.~Fang, J.~Chen, and H.~Ishii, \emph{Towards Integrating Control and
  Information Theories: From Information-Theoretic Measures to Control
  Performance Limitations}.\hskip 1em plus 0.5em minus 0.4em\relax Springer,
  2017.

\bibitem{ruelle1999statistical}
D.~Ruelle, \emph{Statistical Mechanics: Rigorous Results}.\hskip 1em plus 0.5em
  minus 0.4em\relax World Scientific, 1999.

\end{thebibliography}

\end{document}